\newtheorem{thm}{Theorem}
\newtheorem*{thm*}{Theorem}
\newtheorem{lem}{Lemma}
\theoremstyle{definition}
\newtheorem*{exa*}{Example}
\newtheorem{exa}{Example}
\newtheorem*{rem*}{Remark}
\newtheorem*{rems*}{Remarks}
\definecolor{pistachio}{rgb}{0.58, 0.77, 0.45}
\definecolor{eggshell}{rgb}{0.94, 0.92, 0.84}
\newcommand{\mb}{\mathbb}
\title{When are Multiples of Polygonal Numbers again Polygonal Numbers?}
\author{Jasbir S. Chahal}
\author{Michael Griffin}
\author{Nathan Priddis}
\keywords{polygonal numbers, Pell equation, triangular numbers, elliptic curves, Diophantine equations}
\begin{document}

\renewcommand{\thefootnote}{\fnsymbol{footnote}} 
\footnotetext{\emph{MSC 2000 Subject Classifications.} Primary:11D45, secondary:11G05}     
\renewcommand{\thefootnote}{\arabic{footnote}}

\begin{abstract}
Euler showed that there are infinitely many triangular numbers that are three times other triangular numbers. In general, it is an easy consequence of the Pell equation that for a given square-free $m > 1$, the relation $\Delta = m\Delta'$ is satisfied by infinitely many pairs of triangular numbers $\Delta$, $\Delta'$. 

After recalling what is known about triangular numbers, we shall study this problem for higher polygonal numbers. Whereas there are always infinitely many triangular numbers which are fixed multiples of other triangular numbers, we give an example that this is false  for higher polygonal numbers. However, as we will show, if there is one such solution, there are infinitely many. We will give conditions which conjecturally assure the existence of a solution. But due to the erratic behavior of the fundamental unit of $\mathbb Q(\sqrt{m})$, finding such a solution is exceedingly difficult.
Finally, we also show in this paper that, given $m > n > 1$ with obvious exceptions, the system of simultaneous relations $P = mP'$, $P = nP''$ has only finitely many possibilities not just for triangular numbers, but for triplets $P$, $P'$, $P''$ of polygonal numbers, and give examples of such solutions.
\end{abstract}

\maketitle

\section{Introduction}

For an integer $\ell \ge 3$, the $r$-th \emph{polygonal number} or $r$-th $\ell$-\emph{gonal number} $P(\ell, r)$, $r \ge 1$, is defined by
 \[P(\ell, r) = \frac{(\ell-2)r^2-(\ell-4)r}2.\]

It is the total number of dots that are used to enlarge recursively a single dot, representing the first $\ell$-gonal number, to a regular $\ell$-gon with each side having $r$ dots. In the process the first dot remains the only common vertex of successive $\ell$-gons with each side having $r$ dots (cf. Figure \ref{fig1}). 

\begin{figure}[H]
\setlength{\unitlength}{1mm}
\begin{picture}(80,45)
 \put(0,5){\includegraphics{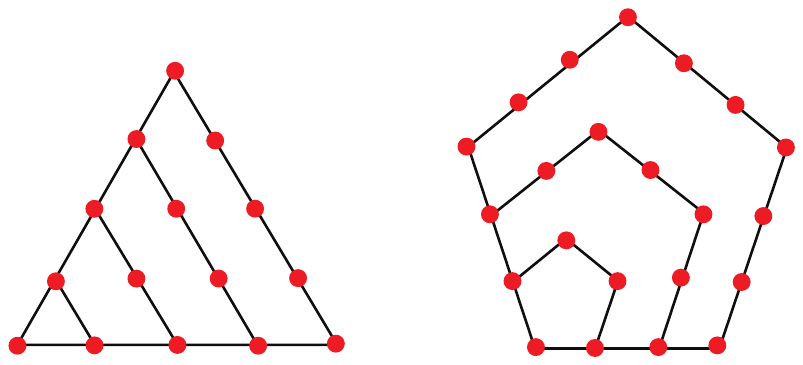}}
 \put(10,0){$P(3, 5)=15$}
 \put(52,0){$P(5,4) = 22$}
\end{picture}
 \caption{Representation of polygonal numbers for $\ell=3$ and $\ell=5$.}\label{fig1}
\end{figure}

Thus
 \[P(3, r) = \frac{r(r+1)}2 = 1, 3, 6, 10, 15, \ldots\]
are the \emph{triangular numbers},
 \[P(4, r) = r^2 = 1, 4, 9, 16, 25, \ldots\]
are the (perfect) squares, whereas
 \[P(5, r) = \frac{r(3r-1)}2 = 1, 5, 12, 22, 35, \ldots\]
are the \emph{pentagonal numbers}, and so on.

These numbers, especially the triangular and the pentagonal numbers, have been studied since antiquity (cf. \cite{dick}). In 1638 Fermat stated without proof his Polygonal Number Theorem: \textit{Every positive integer is a sum of at most $\ell$ $\ell$-gonal numbers}. The case $\ell=4$ is a well-known theorem of Lagrange (\textit{every positive integer is a sum of at most four squares}), proved by Jacobi and independently by Lagrange in 1772. Gauss discovered it for $\ell =3$ as noted in his diary on 10 July 1796. Fermat's Polygonal Number Theorem for all $\ell$ was finally proved by Cauchy in 1813.

Euler determined all triangular numbers that are three times another triangular number. Given an integer $m>1$, it is interesting to study which triangular numbers are $m$ times another triangular number. In this paper, we recall the answer to this question (see \cite{chadsouz}) and determine, more generally, which $\ell$-gonal numbers are $m$ times another $\ell$-gonal number. 

It is an easy consequence of the Pell equation, as we recall, that for any given square-free $m > 1$, the relation $\Delta = m\Delta'$ is satisfied by infinitely many pairs of triangular numbers $\Delta$ and $\Delta'$. However, this is not true for higher $\ell$-gonal numbers. We will give an example such that $P=mP'$ has no solution in $\ell$-gonal numbers (see Example~\ref{eg:nonexample}). Indeed, we will state a condition that, when satisfied, determines infinitely many solutions. We conjecture that no solutions exist when the stated condition fails. 

In contrast, we also prove that, given $m > n > 1$, the simultaneous relations
 \[\begin{cases}
 P = mP'\\
 P = nP''\end{cases}\]
have only finitely many possibilities, not just for triangular numbers but for triplets $P$, $P'$, $P''$ of $\ell$-gonal numbers for all $\ell \ge 3$ except when $\ell = 4$ and $m$ and $n$ are both perfect squares. Table~\ref{table:simultaneousSols} gives several non-trivial examples of this phenomenon for small $\ell$, $m$, and $n$.



\section{The Pell equation and polygonal numbers}

The \emph{diophantine equations} (after Diophantus of Alexandria, a third century AD Greek mathematician and the author of a series of books called \textit{Arithmetica}) are polynomial equations with integer coefficients. The solutions sought are integers or sometimes rationals. One of the most famous diophantine equations is the so--called Pell equation. 
 \begin{equation}\label{e:Pell}
x^2 - my^2 = 1.
 \end{equation}
Because the square factors of $m$ can be absorbed by $y^2$, the integer $m\ne 0$ is generally assumed to be square-free. We will not assume this in this article. We only assume that $m$ is not a perfect square. 

This equation is named after the Englishman John Pell (1611--1685), who contributed nothing to its study.  Due to an error by Euler, the name Pell was given to this diophantine equation, and, according to Weil (\cite[p.~174]{weil}), ``traditional designation [of equation \eqref{e:Pell}] as Pell's equation is unambiguous and convenient.'' 

The Pell equation has a very long and rich history going at least as far back as Archimedes (cf.~\cite{len}, \cite{nels}, \cite{weil}). His complicated ``cattle problem'' reduces to the Pell equation \eqref{e:Pell} with $m=4,729,494$. Its first nontrivial solution (i.e.~with $y\ne 0$) was found by computer search (cf.~\cite{nels}) not too long ago (in 1980).

If we write \eqref{e:Pell} as
\begin{equation*}
(x + y\sqrt m)(x - y\sqrt m) = 1,
\end{equation*}
and assume that $m$ is square--free, then solving the Pell equation is equivalent to finding the units of $\mb Z[\sqrt m]$, a special case of the Dirichlet unit theorem for the ring of integers of a number field. (Actually, this is true only if $m\equiv 0,2,3\pmod 4$, if $m\equiv 1 \pmod 4$, it needs slight modification (cf.~\cite[pp.~78--89]{jsc}) but is not an issue here.)

The solution $(x, y)$ with smallest $x=x_1 > 0$ (hence the smallest $y=y_1 > 0$) is called the \emph{fundamental solution} of \eqref{e:Pell}. Correspondingly, $x_1 + y_1\sqrt m$ is called the \emph{fundamental unit} of $\mb Z[\sqrt m]$ or by abuse of language, that of the real quadratic field $\mb Q(\sqrt m)$.

Finding the fundamental solution of \eqref{e:Pell} is still a very challenging problem, with no apparent pattern to it (cf. \cite{var, weil}). The most obvious way of finding it is to put $y = 1, 2, 3,\ldots$ in $1 + my^2$ and stop as soon as it becomes a perfect square. For example, if $m=2$, $x_1 = 3$, $y_1 = 2$ and if $m=3$, $x_1=2$, $y_1=1$. Likewise, if $m=11$, $x_1=10$, $y_1=3$. Already for $m=13$, this method is not very efficient. Table~\ref{tab:mGenerators} shows the erratic behavior of the fundamental solution for certain integers.  

\begin{table}[t]\label{tab:mGenerators}
	\begin{tabular}{c|c||c|c||c|c}
		$m$ & $(x,y)$ & $m$ & $(x,y)$ & $m$ & $(x,y)$\\
		\hline
		2 & (3,2) & 10 & (19,6) & 46 & (24335,3588)\\
		3 & (2,1) & 11 & (10,3) & 47 & (48,7)\\
		5 & (9,4) & 12 & (7,2) & 94 & (2143295,221064)\\
		6 & (5,2) & 13 & (649,180) & 95 & (39,4)\\
		7 & (8,3) & 30 & (11,2) &&\\
		8 & (3,1) & 31 & (1520, 273) &&
\end{tabular}
	\caption{Fundamental solution to the Pell equation for small $m$}
\end{table}

Brahmagupta (598--670 AD), the first to solve the quadratic equation $ax^2 + bx + c = 0$ the way we do it today, Jayadeva (9th century AD), and Bhaskara II (1114--1185 AD) were the first to study the Pell equation systematically. They showed that (for $m > 1$) infinitely many solutions can be obtained from a given nontrivial solution by using the so-called Brahmagupta identity (the product of two numbers of the form $a^2 + mb^2$ is itself a number of this form). The Fibonacci identity (the set of the sums of two squares is closed under multiplication), which already occurs in Diophantus' \textit{Arithmetica}, is a special case of the Brahmagupta identity. 

This school of Indian mathematicians devised a method (cf.~\cite[pp.~25--38]{var}) they called chakravala (or circular), which they used to solve \eqref{e:Pell} for many values of $m$. For example, for $m=61$ they found the smallest solution $x = 1766319049$, $y = 226153980$. The efficiency and the simplicity of this method impressed even Andr\'e Weil, one of the greatest number theorists of the 20th century, who wrote (cf.~\cite[p.~24]{weil}), ``to have developed \textit{chakravala} and to have it applied successfully to such difficult numerical cases as $N=61$ or $N=67$ had been no mean achievement.'' (His $N$ is our $m$.)

The Indians assumed that a nontrivial solution always exists. Lagrange was the first to prove the existence of a nontrivial solution for all $m > 1$. We now have several proofs of this fact based on different disciplines of number theory. The most common (cf.~\cite[pp.~336--356]{nzm}) given in first courses in number theory uses the techniques from diophantine approximations and continued fractions (cf. \cite{nagell}). 

Although by Lagrange's theorem we know that sooner or later $1+my^2$ becomes a perfect square, the random size of $x_1$, $y_1$ makes it very difficult to predict how far we must go for arbitrarily given $m$. For example, if $m=63$, $x_1 =8$, $y_1 = 1$, whereas, for $m=61$, in the smallest solution $(x_1, y_1)$ above, $y_1$ runs into nine (decimal) digits. Although there are better methods available---such as the method of continued fractions---there is still no last word on solving the Pell equation, or equivalently, on finding the fundamental unit of a real quadratic field (cf.~\cite{len}, \cite{will}).

It is easy to check that the set $G$ of integer solutions of \eqref{e:Pell} with $x>0$ forms an abelian group under the binary operation
\begin{equation}\label{e:grouplaw}
(x, y) \ast (x', y') = (xx' + myy', xy' + x'y)
\end{equation}
with $(1, 0)$ as its identity and $(x, -y)$ the inverse of $(x,y)$. The group law \eqref{e:grouplaw} is suggested by multiplying $(x + y\sqrt m)(x'+y'\sqrt m) = X + Y\sqrt m$ and comparing the rational (respectively irrational) parts in this equation. By the Dirichlet theorem, $G$ is a cyclic group generated by the fundamental solution $g = (x_1, y_1)$. 

If $(x_n, y_n) = g^n = \underbrace{g \ast \cdots \ast g}_{n\, {\rm times}}$, it is easy to see that the integers $y_1 < y_2 < y_3 < \cdots$ increase quadratically and soon $y_n$ is so large that $1/y_n^2$ is almost zero. Thus $(x_n/y_n)^2$ is almost equal to $m$, giving a very good rational approximation to $\sqrt m$.

\begin{exa*}
If $m=2$, $g = (x_1, y_1) = (3, 2)$ and it takes less than a minute to compute by hand and see that $(x_5, y_5) = (3363,2378)$. It is amazing that the approximation $x_5/y_5 = 3363/2378$ to $\sqrt 2$ is the same as given by a handheld calculator.
\end{exa*}

\subsection{Triangular numbers}
As an application of the above discussion of the Pell equation, we recall from \cite{chadsouz} the following generalization of Euler's work regarding which triangular numbers are three times some other triangular numbers.

 \begin{thm}\label{thm1}
For a non-square integer $m>1$, the equation
 \begin{equation}\label{e:triangle}
\Delta' = m\Delta
 \end{equation}
is satisfied by infinitely many triangular numbers $\Delta$, $\Delta'$.
 \end{thm}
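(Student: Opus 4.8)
The plan is to translate the relation $\Delta'=m\Delta$ into a Pell-type equation and then transport the infinitely many solutions of the genuine Pell equation \eqref{e:Pell} onto it. Writing $\Delta=\tfrac{a(a+1)}2$ and $\Delta'=\tfrac{b(b+1)}2$, the relation $\Delta'=m\Delta$ becomes $b(b+1)=m\,a(a+1)$; completing the square (multiply by $4$ and add $1$) turns this into
\[
(2b+1)^2-m(2a+1)^2=1-m .
\]
Thus, setting $v=2b+1$ and $u=2a+1$, I am led to look for solutions of $v^2-mu^2=1-m$ in which $u$ and $v$ are both odd and positive, since exactly these correspond to bona fide triangular numbers $\Delta,\Delta'$. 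The obvious seed is $(v,u)=(1,1)$, coming from $\Delta=\Delta'=0$.

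First I would record the composition step. If $v^2-mu^2=1-m$ and $(x,y)\in G$ so that $x^2-my^2=1$, then the Brahmagupta identity---equivalently, multiplying $(v+u\sqrt m)(x+y\sqrt m)$ and comparing parts exactly as in \eqref{e:grouplaw}---shows that $(v',u')=(vx+muy,\;vy+ux)$ again satisfies $v'^2-mu'^2=(1-m)\cdot 1=1-m$. Hence the cyclic group $G=\langle g\rangle$ constructed above acts on the solution set of $v^2-mu^2=1-m$, and from the single seed $(1,1)$ one generates an infinite sequence of solutions whose coordinates, after composing with the positive generator, grow without bound.

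The main obstacle is parity: applying the fundamental solution $g=(x_1,y_1)$ once need not preserve the condition that $u$ and $v$ be odd; for instance, when $m=8$ one finds $(1,1)\ast(3,1)=(11,4)$, which already fails. To fix this I would reduce the whole orbit modulo $2$. Because the group law \eqref{e:grouplaw} is polynomial with integer coefficients, the pair $(x_n\bmod 2,\;y_n\bmod 2)$ evolves by a deterministic, invertible rule among finitely many states, so it is purely periodic; letting $d\ge 1$ be its period gives $(x_d,y_d)\equiv(x_0,y_0)=(1,0)\pmod 2$, i.e.\ $x_d$ is odd and $y_d$ is even. For such a power $g^d=(x_d,y_d)$ one checks directly from $(v',u')=(vx_d+muy_d,\;vy_d+ux_d)$ that $v'\equiv v$ and $u'\equiv u\pmod 2$, so $g^d$ preserves the odd--odd parity.

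Putting these together, I would apply $g^d$ repeatedly to the seed $(1,1)$. Since $g^d\neq(1,0)$ has $x_d>1$ and $y_d\ge 1$, each application strictly increases both coordinates, so this produces infinitely many solutions $(v,u)$ of $v^2-mu^2=1-m$ with $u,v$ odd and positive; writing $a=\tfrac{u-1}2$ and $b=\tfrac{v-1}2$ then yields infinitely many distinct pairs of triangular numbers with $\Delta'=m\Delta$. The only genuinely delicate point is the parity bookkeeping of the preceding paragraph; everything else is the standard Pell machinery already set up above.
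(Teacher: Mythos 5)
Your argument is correct, and in outline it is the same as the paper's: rewrite \eqref{e:triangle} as the norm-form equation $X^2-mY^2=1-m$, take the obvious seed $(1,1)$, and transport it around by the cyclic group $G$ of solutions of the Pell equation \eqref{e:Pell}, the only issue being to keep both coordinates odd. Where you differ is in the parity bookkeeping, and there your version is in fact the more careful one. The paper asserts that every solution $(x_n,y_n)$ of \eqref{e:Pell} has $x_n$, $y_n$ of opposite parity, so that $(x_n,y_n)\ast(1,1)=(x_n+my_n,\,x_n+y_n)$ is automatically odd in both coordinates. That assertion holds for $m$ odd (reduce mod $2$) and for $m$ even with $m\not\equiv 0\pmod 8$ (reduce mod $8$, using $x^2\equiv y^2\equiv 1\pmod 8$ for odd $x,y$), but it fails for $m\equiv 0\pmod 8$: your example $m=8$, $(3,1)$ is precisely a counterexample, and $(1,1)\ast(3,1)=(11,4)$ indeed produces no pair of triangular numbers. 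Your remedy---pass to a power $g^d\equiv(1,0)\pmod 2$, which exists by pure periodicity of the orbit mod $2$, observe that composition with such an element preserves the odd--odd parity of the seed while strictly increasing both coordinates, and iterate---closes this gap and makes the proof uniform in $m$. One simplification is available: the periodicity argument is unnecessary because $d=2$ always works, since $g^2=(x_1^2+my_1^2,\,2x_1y_1)=(2x_1^2-1,\,2x_1y_1)$ has first coordinate odd and second coordinate even for every $m$. Finally, as you implicitly do, one must discard the seed itself ($u=v=1$ gives $r=s=0$, which is not the index of a triangular number) and start the count with the first iterate.
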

 
 \begin{proof}
We first remark that if the \emph{norm form equation}
 \begin{equation}\label{e:normalform}
(x + y\sqrt m) (x - y\sqrt m) = d
 \end{equation}
has a solution $(x, y)$, infinitely many solutions are obtained by multiplying $\alpha = x + y\sqrt m$ with the units of the form $x' + y'\sqrt m$ in $\mb Z[\sqrt{m}]$. Equation \eqref{e:normalform} is called a norm form equation because the quadratic form $x^2 - my^2$ is the norm $N(\alpha) = \alpha\bar\alpha$ of $\alpha = x + y\sqrt m$ in $\mb Z[\sqrt m]$, $\bar\alpha = x - y\sqrt m$ being the \emph{conjugate} of $\alpha$.

Since the triangular numbers are the numbers $\Delta = \frac{r(r+1)}2, r=1, 2, 3, \ldots$, the equation \eqref{e:triangle} is the same as
 \[s(s+1) = mr(r+1)\]
which again on completing squares is the same as
 \begin{equation*}
(2s+1)^2 - m(2r+1)^2 = 1-m.
 \end{equation*}
So we have to find infinitely many solutions of
 \begin{equation}\label{e:Pell1-m}
X^2 - mY^2 = 1-m
 \end{equation}
with $X$, $Y > 0$ both odd. One such solution, namely $(1, 1)$, is obvious; this, however, does not give us a solution to \eqref{e:triangle}. But we can use this solution to find others. 

If $(x_n, y_n)$ is a solution of the Pell equation \eqref{e:Pell}, then it can be checked that $x_n$, $y_n$ are of opposite parity (i.e.~one odd the other even). Moreover, in case $m$ is even, it is $x_n$ that is odd. Therefore, $(X_n, Y_n) = (x_n, y_n) \ast (1, 1) = (x_n + my_n, x_n + y_n)$ is a solution of \eqref{e:Pell1-m} with both $X_n$, $Y_n$ odd. Thus solutions of \eqref{e:triangle} can be obtained from $(X_n, Y_n)$ with $n > 0$.
 \end{proof}

\begin{rem*}There is another solution to \eqref{e:Pell1-m} that will yield solutions to \eqref{e:triangle} different from those mentioned in the proof, namely $(-1,1)$ as we will see later. In fact for certain $m$ (depending on the class number of $m$), one can show that these are the only solutions to \eqref{e:triangle}. 
\end{rem*}

Suppose $a > b \ge 1$ are square-free integers which are mutually coprime. A slight modification of the above argument  with $m=ab$ in the Pell equation \eqref{e:Pell} yields the following stronger result.  We omit its proof.
 \begin{thm}\label{thm2}
The equation
 \begin{equation*}
a\Delta = b\Delta'
 \end{equation*}
has infinitely many solutions in triangular numbers $\Delta$, $\Delta'$.
 \end{thm}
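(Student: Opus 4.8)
The plan is to mimic the proof of Theorem~\ref{thm1}, replacing the Pell equation \eqref{e:Pell1-m} by a twisted version adapted to the two coefficients $a,b$. Writing $\Delta=\frac{r(r+1)}2$ and $\Delta'=\frac{s(s+1)}2$, the relation $a\Delta=b\Delta'$ becomes $a\,r(r+1)=b\,s(s+1)$, and completing the square exactly as before (using $r(r+1)=\frac{(2r+1)^2-1}4$) turns it into
\begin{equation*}
a X^2 - b Y^2 = a-b, \qquad X=2r+1,\ Y=2s+1.
\end{equation*}
So I must produce infinitely many solutions $(X,Y)$ of this generalized Pell equation with $X,Y>0$ both odd; the pair $(X,Y)=(1,1)$ is an obvious (but degenerate) seed, as it only recovers $r=s=0$.

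First I would record the relevant composition law. Multiplying through by $a$ and setting $U=aX$ rewrites the equation as the norm-form equation $U^2-ab\,Y^2=a(a-b)$ in $\mb Z[\sqrt{ab}]$, i.e.\ $N(U+Y\sqrt{ab})=a(a-b)$, while the Pell equation $x^2-ab\,y^2=1$ furnishes the units of norm $1$. Since $a,b$ are coprime and square-free, $ab>1$ is not a perfect square, so the group $G$ of Pell solutions is infinite. Multiplying the seed $a+\sqrt{ab}$ by $x_n+y_n\sqrt{ab}=g^n$ and comparing the rational and irrational parts yields a new solution; the key point is that the resulting first coordinate $U_n=a x_n + ab\,y_n$ is visibly divisible by $a$, so $X_n:=U_n/a=x_n+b\,y_n$ is again an integer. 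Explicitly one obtains
\begin{equation*}
X_n = x_n + b\,y_n, \qquad Y_n = x_n + a\,y_n,
\end{equation*}
and a direct expansion (or the multiplicativity of the norm) confirms $a X_n^2-bY_n^2=(a-b)\cdot 1=a-b$. Because $y_1<y_2<\cdots$ increases, these pairs are distinct, giving infinitely many solutions.

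The step I expect to require the most care is checking that $X_n$ and $Y_n$ are simultaneously \emph{odd} and positive, since only then do $r=(X_n-1)/2$ and $s=(Y_n-1)/2$ recover honest triangular numbers. Positivity is immediate from $x_n,y_n>0$ and $a,b\ge 1$. For the parity I would invoke the same fact used in Theorem~\ref{thm1}: the Pell coordinates $x_n,y_n$ always have opposite parity, and $x_n$ is the odd one whenever $ab$ is even. If $ab$ is even then $y_n$ is even, so $X_n=x_n+by_n$ and $Y_n=x_n+ay_n$ are both $\equiv x_n\equiv 1\pmod 2$; if $ab$ is odd then $a,b$ are both odd and $X_n\equiv Y_n\equiv x_n+y_n\pmod 2$, which is odd because $x_n,y_n$ have opposite parity. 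Here the coprimality of $a,b$ is exactly what guarantees they are not both even, so one of these two cases always applies. With parity secured, each $(X_n,Y_n)$ with $n\ge 1$ yields a genuine pair of triangular numbers solving $a\Delta=b\Delta'$, completing the argument.
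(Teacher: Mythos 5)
Your proposal is correct and is precisely the argument the paper intends: the paper omits the proof of Theorem~\ref{thm2}, saying only that it is a ``slight modification of the above argument with $m=ab$ in the Pell equation \eqref{e:Pell},'' and your reduction to $aX^2-bY^2=a-b$, followed by composing the seed $(1,1)$ with the units $x_n+y_n\sqrt{ab}$ to get $(X_n,Y_n)=(x_n+by_n,\,x_n+ay_n)$, is exactly that modification. Your parity argument is also sound, since the coprimality and square-freeness of $a$ and $b$ make $ab$ square-free, which is what validates the opposite-parity fact borrowed from the proof of Theorem~\ref{thm1}.
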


\subsection{Generalization to $\ell$-gonal numbers}
Equation \eqref{e:triangle} becomes more interesting if $\Delta$, $\Delta'$ in it are replaced by higher polygonal numbers resulting in the equation
 \begin{equation}\label{e:polygnos}
P(\ell, r) = mP(\ell, s)
 \end{equation}
with $\ell \ge 5$. In this more general setting, the existence of solutions of \eqref{e:polygnos} depends on the existence of solutions of the Pell equation \eqref{e:Pell} satisfying certain congruence equations, which may or may not happen. In fact, whether the conditions are satisfied or not seems to happen randomly (cf. \cite{chaprid, prid}). Before we describe these conditions, we need the following preparatory lemma. 

\begin{lem}\label{lem:my-x}
If $(x,y)$ is a solution to \eqref{e:Pell} with $x>0$ and $y>0$, then $x>y$ and $my>x$. 
\end{lem}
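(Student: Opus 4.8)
The plan is to argue directly from the defining relation, rewriting \eqref{e:Pell} as $x^2 = 1 + m y^2$ and then comparing the relevant positive quantities by comparing their squares. Since $m > 1$ is a non-square integer, I first record the only fact about $m$ that I will need, namely $m \ge 2$.

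For the inequality $x > y$, I would observe that $x^2 = 1 + m y^2 \ge 1 + 2 y^2 > y^2$, using $m \ge 2$. Because both $x$ and $y$ are positive, taking positive square roots preserves the inequality and gives $x > y$.

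For the inequality $m y > x$, I would instead show that $(m y)^2 > x^2$, which again suffices since both sides are positive. Substituting $x^2 = 1 + m y^2$, the difference is $(m y)^2 - x^2 = m^2 y^2 - (1 + m y^2) = m y^2 (m - 1) - 1$. Using $m \ge 2$ and $y \ge 1$, this is at least $2 \cdot 1 \cdot 1 - 1 = 1 > 0$, so $(m y)^2 > x^2$ and hence $m y > x$.

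There is no substantive obstacle here: the statement is elementary and reduces to the bounds $m \ge 2$ and $y \ge 1$. The only point worth double-checking is that the non-square hypothesis genuinely rules out $m = 1$, so that both strict inequalities survive even in the extremal case $y = 1$ where the estimates are tightest; as the computation above shows, both still hold with a little room to spare.
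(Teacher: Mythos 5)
Your proof is correct and follows essentially the same route as the paper: both arguments compare squares via the relation $x^2 = 1 + my^2$ together with $m \ge 2$. If anything, your computation $(my)^2 - x^2 = my^2(m-1) - 1 \ge 1$ handles the strictness of $my > x$ slightly more cleanly than the paper, which deduces $m^2y^2 > x^2$ from $m^2y^2 + 1 > x^2$ and thus implicitly leans on integrality to rule out equality.
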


\begin{proof}
Since $x^2=1+my^2$ and $m\geq 2$ it is clear that $x>y$. Furthermore, $m^2y^2+1>my^2+1=x^2$. Hence $m^2y^2> x^2$. 
\end{proof}

For every solution $(x,y)$ to \eqref{e:Pell}  with $x>0$ and $y>0$, we also have solutions $(\pm x,\pm y)$ in any combination of positive and negative. If we compose these solutions with $(1,1)$ we obtain 
\[
(1,1)\ast (\pm x,\pm y)=(\pm x \pm my, \pm x\pm y).
\] 
From the lemma, we see that only $(x+my,x+y)$ has both entries positive. We can do something similar with $(-1,1)$ and we see that of the four possibilities only 
\[
(-1,1)\ast ( x, y)=(-x+ my, x - y)
\]
has both entries positive. We could do something similar with $(-1,-1)$ and $(1,-1)$, but these are simply the negatives of what we have already done. 

\begin{thm}\label{t:polygnrs}
Given $m > 1$ not a perfect square and $\ell \ge 5$ there exists a solution to \eqref{e:polygnos} if there exists a solution to \eqref{e:Pell} with 
\begin{equation}\label{e:xy-1}
 \left.\begin{array}{l}
x+my \equiv -1 \pmod q\\[1mm]
x+y \equiv -1\pmod q
 \end{array}\right\}
\end{equation}
or
\begin{equation}\label{e:x-y-1}
\left.\begin{array}{l}
my-x \equiv -1 \pmod q\\[1mm]
x-y \equiv -1\pmod q
\end{array}\right\}.
\end{equation}
Here $q = 2\ell - 4$, $\ell - 2$, and $\ell/2 - 1$ according as $\ell\equiv 1, 3 \pmod 4$, $2\pmod 4$, and $0\pmod 4$, resp. Moreover, if one solution exists, then there are infinitely many. 
 \end{thm}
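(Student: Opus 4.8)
The plan is to reduce \eqref{e:polygnos} to a norm form equation by completing the square, exactly in the spirit of the proof of Theorem~\ref{thm1}, and then to read off the congruence conditions from the requirement that the resulting solution correspond to honest polygonal indices. First I would clear denominators in $P(\ell,r)=\frac{(\ell-2)r^2-(\ell-4)r}{2}$ and complete the square: multiplying by $8(\ell-2)$ gives
\begin{equation*}
8(\ell-2)P(\ell,r) = \bigl(2(\ell-2)r-(\ell-4)\bigr)^2 - (\ell-4)^2.
\end{equation*}
Writing $X = 2(\ell-2)r-(\ell-4)$ and $Y = 2(\ell-2)s-(\ell-4)$, the relation \eqref{e:polygnos} becomes the norm form equation
\begin{equation*}
X^2 - mY^2 = (\ell-4)^2(1-m),
\end{equation*}
subject to the side conditions that $X \equiv Y \equiv -(\ell-4) \pmod{2(\ell-2)}$ (so that $r,s$ are integers) and that $X,Y$ be large enough to force $r,s \ge 1$. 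Thus solving \eqref{e:polygnos} is equivalent to producing solutions of this norm form equation in the correct residue class modulo $2(\ell-2)$.

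Next I would exhibit the two base solutions $(\pm(\ell-4),\,\ell-4)$ of the norm form equation (both immediate, since $(\ell-4)^2 - m(\ell-4)^2 = (\ell-4)^2(1-m)$) and compose each with an arbitrary Pell solution $(x,y)$ via the group law \eqref{e:grouplaw}, obtaining
\begin{equation*}
(x,y)\ast(\ell-4,\ell-4) = \bigl((\ell-4)(x+my),\,(\ell-4)(x+y)\bigr)
\end{equation*}
and
\begin{equation*}
(x,y)\ast(-(\ell-4),\ell-4) = \bigl((\ell-4)(my-x),\,(\ell-4)(x-y)\bigr),
\end{equation*}
both of which have positive entries by Lemma~\ref{lem:my-x}. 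Imposing $X \equiv Y \equiv -(\ell-4)\pmod{2(\ell-2)}$ and using the elementary fact that $cA \equiv cB \pmod n$ iff $A \equiv B \pmod{n/\gcd(c,n)}$ with $c=\ell-4$ and $n=2(\ell-2)$, the side conditions collapse to congruences modulo $2(\ell-2)/\gcd(\ell-4,2(\ell-2))$; the first composition then yields \eqref{e:xy-1} and the second yields \eqref{e:x-y-1}.

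The step I expect to require the most care is the determination of the modulus $q$. Here I would compute $\gcd(\ell-4,2(\ell-2))$ according to $\ell \bmod 4$, verifying it equals $1$, $2$, and $4$ when $\ell$ is odd, $\ell\equiv 2\pmod 4$, and $\ell\equiv 0\pmod 4$ respectively, so that $2(\ell-2)/\gcd$ equals $2\ell-4$, $\ell-2$, and $\ell/2-1$ in the three cases --- precisely the values of $q$ asserted in the theorem. The hypothesis $\ell \ge 5$ guarantees $\ell-4 \ge 1$, so none of these moduli degenerate; the excluded value $\ell=4$ is exactly the one for which $\ell-4=0$ collapses the norm form.

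Finally, for the passage from one solution to infinitely many, I would argue by periodicity. Since reduction modulo $q$ commutes with the polynomial group law \eqref{e:grouplaw}, the sequence of residues $(x_n,y_n)\bmod q$, where $g^n=(x_n,y_n)$, is purely periodic in $n$, say with period $N$. Hence if a single power $g^{n_0}$ satisfies \eqref{e:xy-1} or \eqref{e:x-y-1}, then every $g^{n_0+kN}$ satisfies the same congruence; and because $y_n\to\infty$ these produce infinitely many distinct pairs $(r,s)$. This simultaneously disposes of the positivity requirement flagged in the first step, since one may always pass to a sufficiently large $k$ to guarantee $r,s \ge 1$.
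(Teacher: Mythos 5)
Your proposal is correct and follows essentially the same route as the paper: the same completion of squares to the norm form $X^2 - mY^2 = (1-m)(\ell-4)^2$, the same base solutions $(\pm(\ell-4),\ell-4)$ composed with Pell solutions via \eqref{e:grouplaw}, positivity via Lemma~\ref{lem:my-x}, and periodicity of Pell solutions modulo $q$ for the infinitude claim. The only notable difference is presentational: your gcd-cancellation step treats all three residue classes of $\ell \bmod 4$ uniformly (correctly recovering $q = 2\ell-4$, $\ell-2$, $\ell/2-1$), whereas the paper works out only the case $\ell \equiv 1,3 \pmod 4$ and leaves the even cases as ``similar.''
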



\begin{proof}
We consider only the case $\ell\equiv 1, 3\pmod 4$. The other two cases are dealt with similarly. Writing exactly what the two sides of \eqref{e:polygnos} stand for and completing squares, we obtain the generalized Pell equation
 \begin{equation}\label{e:Pell1-ml2}
X^2 - mY^2 = (1-m)(\ell - 4)^2
 \end{equation}
with
 \begin{align*}
X &= (2\ell - 4)\, r - (\ell - 4)\\
\intertext{and}
Y &= (2\ell - 4)\, s - (\ell - 4). 
 \end{align*}
Clearly \eqref{e:polygnos} has a solution if and only if the generalized Pell equation \eqref{e:Pell1-ml2} has a solution with $X$, $Y \equiv -(\ell - 4)\mod{(2\ell - 4)}$ and $X,Y>0$.

One obvious solution to \eqref{e:Pell1-ml2} is $(X, Y) = (\ell - 4, \ell-4)$. We may obtain further solutions by composing this solution with the solutions $(x,y)$ of the Pell equation \eqref{e:Pell}:  
\begin{align*}
X &= (\ell - 4)(x + my)\\
\intertext{and}
Y &= (\ell - 4)(x+y). 
 \end{align*}
Since $\ell \equiv 1, 3\pmod 4$, $\ell - 4$ and $2\ell - 4$ are coprime, so $\ell - 4$ is a unit $\pmod{2\ell - 4}$. Hence the requirement $X, Y \equiv -(\ell - 4)\pmod{2\ell -4}$ for the solutions of \eqref{e:Pell1-ml2} leads to the conditions
 \begin{align*}
x + my &\equiv -1 \pmod{2\ell - 4}\\
\intertext{and}
x + y &\equiv -1 \pmod{2\ell - 4}
 \end{align*}
for a solution $(x,y)$ to \eqref{e:Pell}.

Based on the discussion above, we consider $(-(\ell-4),\ell-4)$ also a solution to \eqref{e:Pell1-ml2}. Composing with $(x,y)$ we obtain 
\begin{align*}
X &= (\ell - 4)(-x + my)\\
\intertext{and}
Y &= (\ell - 4)(x-y). 
\end{align*}
By Lemma~\ref{lem:my-x}, $X,Y>0$. This leads to conditions \eqref{e:x-y-1}. 

As soon as we have one solution $(X_0, Y_0)$ to \eqref{e:Pell1-ml2} satisfying one of the stated congruence relations, we can find infinitely many solutions. In fact, the cyclic group of solutions to \eqref{e:Pell} modulo $q$ becomes a finite cyclic group with identity the class of solutions to \eqref{e:Pell} with $x\equiv 1\pmod q$ and $y\equiv 0\pmod q$ (cf. \cite{chaprid}). There are infinitely many such solutions. Let $(x_s,y_s)$ be such a solution with $x_s,y_s>0$. Then $(X_0,Y_0)*(x_s,y_s)$ is a solution to \eqref{e:Pell1-ml2} satisfying the required congruence relations. Each of these yields a solution to \eqref{e:polygnos}.  
\end{proof}

\begin{rems*}
\item
1. One can see from its proof, that the converse of Theorem~\ref{t:polygnrs} is also true if, for a given $m>1$, any two solutions to \eqref{t:polygnrs} are related by a unit (and therefore a solution to \eqref{e:Pell}). This is related to the class number problem. In this case, we can recover all polygonal numbers which satisfy \eqref{e:polygnos}. This is the case, for example, with $\ell=5$, $m=2$ as we will see in Example~\ref{ex:example}. 

\vspace*{.25cm}
\noindent 2. We can reformulate \eqref{e:xy-1} in the following way. 
 \begin{align*}
(m-1)x &\equiv -(m-1) \pmod{q}\\
\intertext{and}
(m-1)y &\equiv 0 \pmod{q}.
 \end{align*}
Let $d=\gcd(m-1,q)$. If $d$ is odd, we see there is only one solution to \eqref{e:xy-1}, namely $x\equiv -1 \pmod q, y\equiv 0\pmod q$. If $d$ is even, one can check, there is an additional possible solution (modulo $q$) satisfying \eqref{e:Pell}, which is $x\equiv \tfrac{q}{2}-1\pmod q$ and $y\equiv \tfrac{q}{2}\pmod q$. Only one of these is possible, and these are the only possibilities satisfying \eqref{e:xy-1}. 

\vspace*{.25cm}
\noindent 3. In \cite{chaprid, prid} the authors investigated $G_{m,q}$, the group modulo $q$ of solutions to \eqref{e:Pell}. It is a finite cyclic group and its order is denoted $g_m(q)$. Solutions to \eqref{e:xy-1} are closely related to the order of this group. Indeed from the previous remark, we see that solutions to \eqref{e:xy-1} correspond with elements of $G_{m,q}$ of order 2. Thus for example, we have no solutions to \eqref{e:xy-1} if $g_m(q)$ is odd. 
\end{rems*}

We now give some examples to show that the Equations~\eqref{e:xy-1} and \eqref{e:x-y-1} are often satisfied but not always. We emphasize we differentiate between solutions to \eqref{e:xy-1} or \eqref{e:x-y-1} from the theorem and solutions to the original problem in \eqref{e:polygnos}. When no solution to \eqref{e:xy-1} or \eqref{e:x-y-1} exists, it does not imply the same about the original problem. Verifying there are no solutions to \eqref{e:polygnos} can be much more involved. 

In the following examples, we focus mostly on solutions to \eqref{e:xy-1} and \eqref{e:x-y-1}, but in Example~\ref{eg:nonexample}, we see in fact that there are no solutions to the original problem \eqref{e:polygnos}. Also in Example~\ref{ex:example} we show that for certain choices of $\ell$ and $m$, Theorem~\ref{t:polygnrs} yields all solutions to \eqref{e:polygnos}. 

\begin{exa}\label{ex:example}
Consider $\ell=5$ (so $q=6$). These correspond to pentagonal numbers.  
For $m=2$, the fundamental solution to \eqref{e:Pell} is $(3,2)$. This does not satisfy \eqref{e:xy-1} or \eqref{e:x-y-1}. However if we look at powers of this fundamental solution, we see that the next solution is $(17,12)$, which satisfies \eqref{e:xy-1} and the third solution is $(99,70)$, which satisfies \eqref{e:x-y-1}. Hence there are infinitely many pentagonal numbers which are two times another pentagonal number. In fact, if we consider the ring $\mb Z[\sqrt{2}]$, then the pentagonal numbers which are twice another pentagonal number correspond to solutions to 
\[
x^2-2y^2=-1,
\]
which in turn are units in $\mb Z[\sqrt{2}]$. The units in $\mb Z[\sqrt{2}]$ are powers of $(1+\sqrt{2})$---which in the notation of the Pell equation we would write as $(1,1)$ as in the proof of the theorem---negatives, and conjugates of such powers. Hence, we have actually found all solutions to \eqref{e:polygnos} with $\ell=5$ and $m=2$. 

With $\ell=5$, we can do similar computations for $m=3,4,5,6,7$ finding infinitely many solutions to \eqref{e:polygnos} via the conditions in \eqref{e:xy-1}. With $m=12$, there is no solution to \eqref{e:xy-1}; however, the fundamental solution $(7,2)$ satisfies $\eqref{e:x-y-1}$, and so we obtain infinitely many solutions in this case as well. 


For $m=10,11,13$, one can easily check that there are no solutions to \eqref{e:xy-1} or \eqref{e:x-y-1} as set out in the theorem. Indeed, in the notation introduced in the Remark above, $g_{10}(5)=1$ and $g_{13}(5)=1$. That means that every solution to \eqref{e:Pell} is congruent to $(1,0)\mod q$. Furthermore, $g_{11}(5)=2$, so we only need to check the conditions on the fundamental solution. Again, this does not necessarily mean there are no solutions to \eqref{e:polygnos}, although we conjecture that there are not.


\end{exa}

\begin{exa}\label{eg:nonexample}
Consider $\ell=6$ (hence $q=4$). With $m=2$. This Pell equation has fundamental solution $(3,2)$, and as we have seen the next solution is $(17,12)$. Neither of these solutions satisfies either \eqref{e:xy-1} or \eqref{e:x-y-1}. Since $(17,12)$ is congruent to $(1,0)\pmod q$, (i.e. $g_2(4)=2$) these are the only solutions we must consider.  In fact we will now demonstrate that there are no hexagonal numbers which are twice another hexagonal number.

In this case, we look for solutions to 
\[
X^2-mY^2=(1-m)(\tfrac{\ell-4}{2})^2
\]
with $X,Y\equiv -\tfrac{\ell-4}{2}\pmod q$, which becomes 
\[
X^2-2Y^2=-1.
\]
Any solution to this equation corresponds to a unit in $\mb Z[\sqrt{2}]$ with norm $-1$---that is to say a unit with norm 1 multiplied by either $(1+\sqrt{2})$ or $(-1+\sqrt{2})$. But these are exactly the solutions to \eqref{e:Pell} multiplied by either $(1,1)$ or $(-1,1)$ as we have seen. Hence there are no solutions. 

With $\ell=6$ we can find solutions to \eqref{e:xy-1} or \eqref{e:x-y-1} with $m=3,6,7,8,11$, but not for $m=2,5,10,12,13$. Again, we conjecture that there are no solutions for these last choices of $m$. 
\end{exa}

\begin{exa}
Finally, consider $\ell=8$ (hence $q=3$). In this case we have solutions to \eqref{e:xy-1} or \eqref{e:x-y-1} when $m=2,3,5,6,7,8,12$, but not when $m=10,11,13$. (Again, this does not necessarily mean there are no solutions to \eqref{e:polygnos} for these choices of $m$.) 
\end{exa}

 \section{Elliptic Curves and Polygonal Numbers}

In what follows, by an elliptic curve we mean a diophantine equation
\begin{equation}\label{eq11}
dy^2 = x^3 + ax^2 + bx + c
\end{equation}
with $a$, $b$, $c$, $d$ in $\mb Z$, $d \ge 1$ and the right-hand side of \eqref{eq11} having no repeated root. The rational solutions of \eqref{eq11}, more precisely the set $E(\mb Q)$ of rational points on the elliptic curve $E$ defined by \eqref{eq11} together with the point $O$ at infinity is, by the Mordell-Weil theorem (cf.~\cite[p.~125]{jsc}), a finitely generated abelian group with $O$ as its identity, under the group law given by $P + Q + R = O$ if the points $P$, $Q$, $R$ on \eqref{eq11} are collinear. Thus the torsion subgroup (i.e.~the subgroup of points of finite order) of $E(\mb Q)$ is finite. By the Lutz-Nagell theorem (cf.~\cite[p.~136]{jsc}), a torsion point can have only integer coordinates. However, a point with integer coordinates need not be a torsion point. \textit{A priori} it is possible for \eqref{eq11} to have infinitely many integer solutions. But the next theorem (a special case of Siegel's theorem) from the theory of diophantine equations (cf.~\cite[p.~255]{mor}) states that this is not possible.

 \begin{thm}\label{thm3}
The elliptic curve \eqref{eq11} has only finitely many points on it with integer coordinates.
 \end{thm}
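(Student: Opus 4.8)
The plan is to deduce the statement from the Mordell-Weil theorem, already recalled above, together with a finiteness theorem from Diophantine approximation; this is Siegel's method. First I would reduce to a Weierstrass model. Substituting $X=dx$ and $Y=d^2y$ transforms \eqref{eq11} into
\[
Y^2 = X^3 + adX^2 + bd^2X + cd^3,
\]
an elliptic curve $E$ in Weierstrass form with integer coefficients whose cubic still has distinct roots (its roots are $d$ times those of the original). The map $(x,y)\mapsto(dx,d^2y)$ is injective and sends integer points of \eqref{eq11} to integer points of $E$, so it suffices to bound the integer points of $E$. By the Mordell-Weil theorem $E(\mb Q)$ is finitely generated, and I would equip it with the Weil height $h(x(P))$ of the $x$-coordinate and the canonical height $\hat h$, related by $h(x(P))=2\hat h(P)+O(1)$. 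Since only finitely many points have bounded height, it is enough to rule out an infinite sequence of distinct integer points $P_i$, for which necessarily $\hat h(P_i)\to\infty$.

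Fix a large integer $m$. Because $E(\mb Q)/mE(\mb Q)$ is finite (weak Mordell-Weil), infinitely many of the $P_i$ lie in a single coset; fixing one of them as $R$, I may write $P_i=[m]Q_i+R$ with $Q_i\in E(\mb Q)$ along this subsequence. Bilinearity of the canonical height then gives $\hat h(Q_i)\sim m^{-2}\hat h(P_i)$, so the rational points $Q_i$ have height growing a factor $m^2$ more slowly than the $P_i$.

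The heart of the matter is to exploit integrality at a single place. Passing to a further subsequence, the $Q_i$ converge, at a suitable absolute value $w$, to a fixed algebraic point $T$ --- an $m$-division point attached to the limiting value of $P_i-R$ --- and I set $\alpha=x(T)$. Decomposing $\hat h$ into its local contributions and using that $x(P_i)$ has no denominators, one bounds the $w$-adic distance from $Q_i$ to $T$ in terms of the full height $\hat h(P_i)\sim m^2\hat h(Q_i)$, producing approximations
\[
|x(Q_i)-\alpha|_w \le C\,H(x(Q_i))^{-\kappa}
\]
whose exponent $\kappa$ grows with $m$. Choosing $m$ large makes $\kappa>2$; since $\alpha$ is a fixed algebraic number and the $x(Q_i)$ are distinct rationals, this contradicts Roth's theorem, and so the sequence $P_i$ cannot be infinite.

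I expect this last, Diophantine-approximation, step to be the main obstacle. Converting the integrality of $P_i$ into genuine $w$-adic closeness of $Q_i$ to the fixed point $T$ requires careful bookkeeping with the decomposition of the canonical height into local heights, and the final contradiction rests on Roth's theorem, which is itself deep. It is precisely the freedom to take $m$ arbitrarily large that pushes the approximation exponent past the threshold $2$ at which Roth applies. I note that this route yields only finiteness and no explicit bound; an effective version would instead proceed through Baker's theory of linear forms in logarithms, which is not needed here.
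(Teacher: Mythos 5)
Your outline is mathematically sound, but note that the paper does not prove this theorem at all: it is quoted as a special case of Siegel's theorem, with a citation to Mordell's \emph{Diophantine Equations} \cite[p.~255]{mor}. What you have written is, in essence, a sketch of Siegel's actual proof (the version via Roth's theorem, as in Silverman's \emph{Arithmetic of Elliptic Curves}, Ch.~IX). Your reduction of \eqref{eq11} to Weierstrass form by $(x,y)\mapsto(dx,d^2y)$ is correct and injective on integer points, the descent $P_i=[m]Q_i+R$ via weak Mordell--Weil together with the quadraticity of $\hat h$ is the right mechanism, and the contradiction with Roth's theorem once the approximation exponent exceeds $2$ is exactly how the (ineffective) finiteness is obtained. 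The thin spots are the ones you flag yourself: converting integrality into $w$-adic closeness (for integer points one takes $w$ to be the archimedean place, where $|x(P_i)|=H(x(P_i))\to\infty$ forces $P_i\to O$ and $\mathrm{dist}(P_i,O)\asymp|x(P_i)|^{-1/2}$), the comparison $\mathrm{dist}(Q_i,T)\ll\mathrm{dist}(P_i,\lim P_i)$ because multiplication by $m$ is a local isomorphism, and the degenerate case where $x$ has a pole at $T$, which needs a different uniformizer; all standard, but lengthy. The trade-off between the two routes is clear: the paper's bare citation is the appropriate choice for an expository article, since any honest proof must pass through machinery (Roth's theorem, or Baker's method for an effective version) far deeper than anything else in the paper, while your sketch has the virtue of making visible where the finiteness comes from and why it yields no bound on the size of the integer solutions.
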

However, \eqref{eq11} may or may not have infinitely many rational solutions. In fact, both the cases are believed to occur with equal probability (cf. \cite{silverberg}). 

We now prove the following fact about polygonal numbers.

 \begin{thm}\label{thm4}
Given $\ell \ge 3$ and $m > n > 1$ ($m$ or $n$ square-free in case $\ell=4$), there are only finitely many triplets $r$, $s$ and $t$ such that
 \begin{equation}\label{e:polygdouble}
P(\ell, r) = m P(\ell, s) = n P(\ell, t).
 \end{equation}
 \end{thm}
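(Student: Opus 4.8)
The plan is to convert the system \eqref{e:polygdouble} into a pair of simultaneous generalized Pell equations and then eliminate one variable to produce a single cubic in one unknown, which will define an elliptic curve; Siegel's theorem (Theorem~\ref{thm3}) then bounds the number of integer points. Writing out $P(\ell,r)=mP(\ell,s)$ and $P(\ell,r)=nP(\ell,t)$ and completing the square exactly as in the proof of Theorem~\ref{t:polygnrs}, I would set $X=(2\ell-4)r-(\ell-4)$, $Y=(2\ell-4)s-(\ell-4)$, and $Z=(2\ell-4)t-(\ell-4)$, obtaining
\begin{align*}
X^2-mY^2 &= (1-m)(\ell-4)^2\\
X^2-nZ^2 &= (1-n)(\ell-4)^2.
\end{align*}
Thus $r$, $s$, $t$ are in bijection (for fixed $\ell$) with integer solutions $(X,Y,Z)$ of this system subject to the congruence $X,Y,Z\equiv -(\ell-4)\pmod{2\ell-4}$, so it suffices to bound the integer points on the affine variety cut out by these two quadrics in $\mathbb{A}^3$.

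Next I would eliminate a variable to exhibit this curve as (a cover of) an elliptic curve. The two quadrics both contain the term $X^2$, so subtracting gives $mY^2-nZ^2=(m-n)(\ell-4)^2$, a conic relating $Y$ and $Z$; combined with $X^2=mY^2+(1-m)(\ell-4)^2$, the intersection of two quadric surfaces in $\mathbb{P}^3$ is generically a curve of genus one. Concretely, I would treat $Z$ as a parameter, solve the conic for $Y^2$ as a rational function of $Z$, substitute into $X^2=mY^2+(1-m)(\ell-4)^2$, and clear denominators to obtain an equation of the form $(\text{square})=(\text{quartic in }Z)$, or after a standard birational change of coordinates, a model $dY^2=X^3+aX^2+bX+c$ of the shape \eqref{eq11}. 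The key point I must verify is that the resulting cubic (or quartic) has no repeated root, i.e.\ that the curve is genuinely elliptic and not degenerate; this is where the hypotheses enter, and it is precisely here that the excluded case $\ell=4$ with $m,n$ both squares, where $X^2-mY^2$ and $X^2-nZ^2$ factor and the intersection degenerates into lines yielding infinitely many solutions, must be ruled out.

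The main obstacle is therefore the nondegeneracy analysis: I expect the discriminant of the cubic to be a polynomial in $m$, $n$, $\ell$ that vanishes only in the excluded situations, and checking this cleanly (rather than by brute force) is the delicate step. I would handle it by computing the discriminant of the quartic in $Z$ arising from the elimination and showing its only vanishing corresponds to $\ell=4$ together with $m$ and $n$ being perfect squares (the square-free hypothesis on $m$ or $n$ in the case $\ell=4$ being imposed exactly to keep one of the forms irreducible). Once nondegeneracy is established for all remaining $(\ell,m,n)$, Theorem~\ref{thm3} applies directly: the elliptic curve has only finitely many integer points, each integer solution $(X,Y,Z)$ lifts to at most finitely many points under the finite-degree elimination map, and the congruence and positivity constraints only cut this finite set down further, giving finitely many triplets $(r,s,t)$ as claimed.
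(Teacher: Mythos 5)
Your overall route is the paper's: reduce \eqref{e:polygdouble} to the simultaneous Pell-type equations $X^2-mY^2=c_1$, $X^2-nZ^2=c_2$ with $c_1=(1-m)(\ell-4)^2$, $c_2=(1-n)(\ell-4)^2$, pass to an elliptic curve, and invoke Theorem~\ref{thm3}. But the two steps where you actually produce the curve do not work as written. First, your elimination is circular: substituting $mY^2=nZ^2+(m-n)(\ell-4)^2$ into $X^2=mY^2+c_1$ gives back $X^2=nZ^2+c_2$, i.e.\ the second conic you started with, not a quartic in $Z$. Naive substitution only recovers the projection of the space curve and loses the double-cover structure; the move that works (and is what the paper does, in disguise) is to \emph{multiply} the two equations, which gives the identity $(mnYZ)^2=mn(X^2-c_1)(X^2-c_2)$. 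Second, and more fundamentally, Theorem~\ref{thm3} concerns \emph{integer} points on a curve in the form \eqref{eq11}, and a birational change of coordinates need not carry integer points to integer points, so ``a standard birational change of coordinates'' is precisely the step that cannot be left to generalities. The paper's proof is engineered to avoid this: with $u=ar-b$, $v=as-b$, $w=at-b$ (where $a=2\ell-4$, $b=\ell-4$), it sets $X=m^2nv^2$ and $Y=m^2n^2uvw$, a polynomial map with integer coefficients under which every integer solution lands on an integer point of
\begin{equation*}
Y^2=X(X-A)(X-B),\qquad A=mn(m-1)b^2,\quad B=mn(m-n)b^2,
\end{equation*}
and the fibers of this map are visibly finite, so Siegel applies directly. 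Your argument needs such an explicit integral polynomial map (for your even quartic one exists: $\widetilde X=mnX^2$, $\widetilde Y=(mn)^2XYZ$ satisfy $\widetilde Y^2=\widetilde X(\widetilde X-mnc_1)(\widetilde X-mnc_2)$), and exhibiting it is the actual content of the proof.

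A smaller but genuine misconception: the hypotheses do not enter where you say they do. The discriminant does not vanish ``only when $\ell=4$ and $m,n$ are both squares''; it vanishes for $\ell=4$ always, since $b=\ell-4=0$ forces $c_1=c_2=0$ and the cubic collapses to the singular curve $Y^2=X^3$ regardless of $m$ and $n$. So $\ell=4$ cannot be handled by the curve at all; the paper excludes it and treats it separately as the trivial case, where $r^2=ms^2=nt^2$ with $m$ or $n$ square-free and $>1$ has no solutions with $r\ge1$, while $m$ and $n$ both perfect squares genuinely produces infinite families. Conversely, for $\ell\ne4$ the nondegeneracy you anticipate as the ``delicate step'' is immediate: $A\ne0$ since $m>1$, $B\ne0$ since $m>n$, and $A\ne B$ since $n>1$, so the roots $0,A,B$ are automatically distinct and no discriminant computation is needed.
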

 
 \begin{proof}
We assume $\ell \ne 4$ (the trivial case). The relation \eqref{e:polygdouble} is the same as the diophantine equations
 \begin{align*}
(\ell - 2)r^2 - (\ell - 4)r &= m((\ell - 2)s^2 - (\ell - 4)s)\\
&= n((\ell - 2)t^2 - (\ell - 4)t).
 \end{align*}
Setting $a=2(\ell-2)$ and $b=(\ell-4)$ and completing squares,  these equations become
\begin{equation}\label{e:fibonacci}
 \left.\begin{array}{l}
(ar-b)^2 - m(as-b)^2 = -(m-1)b^2\\[1mm]
(ar-b)^2 - n(at-b)^2 = -(n-1)b^2.
 \end{array}\right\}
 \end{equation}

Let
 \begin{equation*}
u = ar-b, \quad v = as-b, \quad w = at-b,
 \end{equation*}
and
 \begin{equation}\label{e:sub1}
X = m^2nv^2.
 \end{equation}
Then (\ref{e:fibonacci}) implies 
 \begin{equation}\label{e:sub2,3}
X - A = mnu^2, \quad  X- B = mn^2w^2
 \end{equation}
where $A = mn(m-1)b^2$  and $ B = mn(m-n)b^2.$

Now set $Y=m^2n^2uvw$. Multiplying \eqref{e:sub1} with both equations in \eqref{e:sub2,3}, we find
 \begin{equation}\label{e:elliptic}
Y^2 = X(X-A)(X-B).
 \end{equation}
 By our assumption on $m$, $n$ and $\ell$, \eqref{e:elliptic} is an elliptic curve with integer coefficients. So by \ref{thm3}, it has only finitely many points with integer coordinates. Consequently, \eqref{e:polygdouble} has only finitely many possibilities.
\end{proof}

\begin{exa}
Consider $\ell=5$, $m=6$ and $n=3$. We have seen that there are infinitely many solutions to each of $P(r,5)=6P(s,5)$ and $P(r,5)=3P(t,5)$. However, following the proof of Theorem~\ref{thm4}, finding simultaneous solutions to both can be reduced to finding integer pionts on the elliptic curve
\[
Y^2=X(X-90)(X-54).
\]

This curve has several integer points, however in order to correspond to an integer solution to \eqref{e:polygdouble}, we must have $m^2n=108$ divide $X$, and $m^2n^2=324$ divide $Y$. Only three such points exist: $(0,0)$, $(108, 324),$ and $(90828, 27351756)$. The first point gives  $v^2=0$, which does not yield an integer value for $s$. The second point gives $v^2=u^2=t^2=1$, which yields only the trivial solution $(r,s,t)=(0,0,0).$ The third point gives $v=\pm 29,u=\pm 71,$ and $w=\pm41$, which yields only the solution $(r,s,t)=(12,5,7)$. Hence the only simultaneous solution in pentagonal numbers to $P=6P'=3P''$ is $P=P(5,12)=210$,   $P'=P(5,5)=35$, and  $P''=P(5,7)=70.$

Table \ref{table:simultaneousSols} gives several other examples. In each case listed, the solution is unique for the given $\ell$, $m$ and $n$.
\end{exa}

\begin{table}
	\begin{tabular}{l|c|c|c|c}
		$\ell$-gonal numbers  & $P$ & $P'$ & $P''$ &$P=mP'=nP''$\\\hline 
		$\ell=3$, triangular &  6&1&3&$P=6P'=2P''$\\ 
		\hline
		& 630 & 105 & 210  &$P=6P'=3P''$\\ 
		\hline
		& 105 & 15& 21  &$P=7P'=5P''$\\ 
		\hline
		$\ell=5$, pentagonal  & 210& 35 & 70&$P=6P'=3P''$\\\hline 
		$\ell=6$, hexagonal  & 120 & 6 & 15&$P=20P'=8P''$\\\hline 
		$\ell=7$, heptagonal & 12852 &1071& 6426  &$P=12P'=2P''$\\\hline 
	\end{tabular}
\caption{Solutions to $P=mP'$ and $P=nP''$}
\label{table:simultaneousSols}
\end{table}

\begin{rem*}Relations \eqref{e:fibonacci} are a variation of equations considered by Fibonnacci in his book on squares \cite{fib}. The reader with some knowledge of algebraic geometry may also see conceptually that such a pair of equations always defines an elliptic curve. The simultaneous equations
 \begin{equation}\label{e:fib2}
\left.\begin{matrix}
x^2 + ny^2 = z^2\\
x^2 - ny^2 = t^2\end{matrix}\right\}
 \end{equation}
considered by Fibonnacci are related to the congruent number problem (cf. \cite{jsc}). The pair of equations \eqref{e:fib2} defines an elliptic curve with its Weierstrass equation
$y^2 = x^3 - n^2x.$
\end{rem*}
\section*{acknowledgment}
The second author was supported by NSF grant DMS-1502390


\vfill\eject

\end{document}